\algnewcommand\algorithmicinput{\textbf{Initialization:}}
\algnewcommand\init{\item[\algorithmicinput]}
\algnewcommand\algorithmicevol{\textbf{Evolution:}}
\algnewcommand\evol{\item[\algorithmicevol]}
\algnewcommand\algorithmicawake{\textsf{\textit{{AWAKE}}}}
\algnewcommand\awake{\item[\algorithmicawake]}
\algnewcommand\algorithmicidle{\textsf{\textit{{IDLE}}}}
\algnewcommand\idle{\item[\algorithmicidle]}
\newcommand{\until}[1]{\{1,\ldots,#1\}}
\newcommand{\EE}{\mathcal{E}}
\newcommand{\GG}{\mathcal{G}}
\newcommand{\NN}{\mathcal{N}}
\newcommand{\mS}{\mathcal{S}}
\newcommand{\VV}{\mathcal{V}} 
\newcommand{\WW}{{W}}
\newcommand{\E}{\mathbb{E}}
\newcommand{\gs}{\textsl{g}}
\newcommand{\ff}{f_\textsl{best}}
\newcommand{\lxi}{x_{j{\mid}i}(t)}
\newcommand{\bG}{\bar{G}}
\newcommand{\m}{\mathop{\textrm{minimize}}}
\newcommand{\R}{\mathbb{R}}
\newcommand{\x}{\bm{x}}
\newcommand{\prox}{\text{prox}}
\newcommand{\NNio}{\NN_{i,out}}
\newcommand{\NNii}{\NN_{i,in}}
\newcommand{\tNNii}{\NN_{i,in}}
\newcommand{\DBP}{Distributed Block Proximal Method }
\newcommand{\DBPnospace}{Distributed Block Proximal Method}
\newcommand{\bx}{\bar{x}}
\newcommand{\bsigma}{m}
\newcommand{\bS}{\bar{S}}
\newcommand{\bR}{\bar{R}}
\newcommand{\pushright}[1]{\ifmeasuring@#1\else\omit\hfill$\displaystyle#1$\fi\ignorespaces}
\newcommand{\pushleft}[1]{\ifmeasuring@#1\else\omit$\displaystyle#1$\hfill\fi\ignorespaces}
\newcommand\oprocendsymbol{\hbox{$\square$}}
\newcommand\oprocend{\relax\ifmmode\else\unskip\hfill\fi\oprocendsymbol}
\theoremstyle{plain}
\newtheorem{theorem}{Theorem}
\newtheorem{lemma}{Lemma}
\theoremstyle{definition}
\newtheorem{assumption}{Assumption}
\theoremstyle{remark}
\newtheorem{remark}{Remark}
\title{On the Linear Convergence Rate\\ of the Distributed Block Proximal Method
\thanks{
This result is part of a project that has received funding from the European Research Council (ERC) under the European Union's Horizon 2020 research and innovation programme (grant agreement No 638992 - OPT4SMART).\newline\newline
\textcopyright  2020 IEEE.  Personal use of this material is permitted.  Permission from IEEE must be obtained for all other uses, in any current or future media, including reprinting/republishing this material for advertising or promotional purposes, creating new collective works, for resale or redistribution to servers or lists, or reuse of any copyrighted component of this work in other works.\newline\newline
Digital Object Identifier 10.1109/LCSYS.2020.2976311}}
\author[]{Francesco~Farina}
\author[]{Giuseppe~Notarstefano}
\affil[]{Department of Electrical, Electronic and Information Engineering\\ Universit{\`a} di Bologna, Bologna, Italy.}
\date{}
\begin{document}
\maketitle

\begin{abstract}
  The recently developed \DBPnospace, for solving stochastic big-data convex
  optimization problems, is studied in this paper under the assumption of
  constant stepsizes and strongly convex (possibly non-smooth) local objective functions.
  This class of problems arises in many learning and classification problems in
  which, for example, strongly-convex regularizing functions are included in the
  objective function, the decision variable is extremely high dimensional, and
  large datasets are employed.
  The algorithm produces local estimates by means of block-wise updates and communication among the agents. 
  The expected distance from the
  (global) optimum, in terms of cost value, is shown to decay linearly to a
  constant value which is proportional to the selected local stepsizes.
    A numerical example involving a classification problem corroborates the theoretical results.
\end{abstract}


\section{Introduction} 
In this paper, we address in a distributed way stochastic big-data convex
optimization problems involving \emph{strongly convex} (possibly nonsmooth)
local objective functions, by means of the
\DBPnospace~\cite{farina2019randomized,farina2019subgradient}.
Problems with this structure naturally arise in many learning and control
problems in which the decision variable is extremely high dimensional and large
datasets are employed.
Relevant examples include: direct policy search in reinforcement
learning~\cite{recht2019reinforcement}, dynamic problems involving stochastic
functions generated from collected samples to be processed
online~\cite{xiao2010dual}, learning problems involving massive
datasets in which sample average approximation techniques are
used~\cite{kleywegt2002sample}, and settings in which only noisy subgradients of the
objective functions can be computed at each time
instant~\cite{ram2010distributed}.

Distributed algorithms for solving stochastic problems have been widely
studied~\cite{ram2010distributed,agarwal2011distributed,srivastava2011distributed,nedic2016stochastic,li2018stochastic,ying2018performance}. On
the other side, distributed algorithms for big-data problems
through block communication have started to appear only
recently~\cite{necoara2013random,arablouei2013distributed,wang2017coordinate,notarnicola2018distributed, FARINA2019243}. The
\DBP solves problems that can be together non-smooth, stochastic and big-data, thus distinguishing from the above works (see~\cite{farina2019randomized} for a comprehensive literature review). This
algorithm evolves through block-wise communication and updates (involving subgradients of the local functions and proximal mappings induced by some distance genereting functions) and it has been already shown to achieve a sublinear convergence rate on
problems with non-smooth convex objective functions. 
The contribution of this
paper is to extend this result by showing that, under strongly convex (possibly non-smooth) local objective
functions and constant stepsizes, the \DBP exhibits a linear convergence rate (with a constant error
term) to the optimal cost in expected value. 
The main challenge in the linear-rate analysis relies in the block-wise nature of the algorithm.

\section{Set-up and preliminaries}\label{sec:problem}
\subsection{Notation, definitions and preliminary results}
Given a vector $x\in\R^n$, we denote by $x_\ell$ the
$\ell$-th block of $x$, i.e., given a partition of the identity matrix
$I=[U_1,\dots,U_B]$, with $U_\ell\in\R^{n\times n_\ell}$ for all $\ell$ and
$\sum_{\ell=1}^B n_\ell=n$, it holds $x = \sum_{\ell=1}^B U_\ell x_\ell$
and $x_\ell=(U_\ell)^\top x$. In general, given a vector $x_i\in\R^n$, we denote by $x_{i,\ell}$ the $\ell$-th block of $x_i$. 
Given a matrix $A$, we denote by
$a_{ij}$ the element of $A$ located at row $i$ and column $j$. 
Given two vectors $a,b\in\R^n$ we denote by $\langle a,b\rangle$ their scalar product. Given a discrete random variable
$r\in\until{R}$, we denote by $P(r=\bar{r})$ the probability of $r$ to be equal
to $\bar{r}$ for all $\bar{r}\in\until{R}$. Given a nonsmooth function $f$, we
denote by $\partial f(x)$ its subdifferential at $x$.

The following preliminary result will be used in the paper.
\begin{lemma}\label{lemma:series}
    Given any two scalars $\delta\neq\gamma\neq 1$, it holds that
    \begin{enumerate}[label=(\roman*)]
        \item\label{item:1} $\sum_{s=r}^t\delta^s = \frac{\delta^r-\delta^{t+1}}{1-\delta}$
        \item\label{item:3} $\sum_{s=0}^t \delta^{t-s}\gamma^s=\frac{\delta^{t+1}-\gamma^{t+1}}{\delta-\gamma}.$\oprocend
    \end{enumerate}
\end{lemma}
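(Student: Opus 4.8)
The plan is to treat both identities as elementary consequences of the closed form for a finite geometric series, so that the whole argument reduces to one base computation together with some index bookkeeping. First I would recall (or prove, by an immediate induction on $n$, or by the telescoping identity $(1-\delta)\sum_{s=0}^n\delta^s = 1-\delta^{n+1}$) the standard fact that for $\delta\neq 1$ one has $\sum_{s=0}^n\delta^s=\frac{1-\delta^{n+1}}{1-\delta}$. Note that for part \ref{item:1} the only hypothesis actually used is $\delta\neq 1$, so that the denominator $1-\delta$ is nonzero, while for part \ref{item:3} the only hypothesis used is $\delta\neq\gamma$, so that $\delta-\gamma\neq 0$; I would point this out to make clear which part of the standing assumption $\delta\neq\gamma\neq 1$ each identity relies on.

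For part \ref{item:1} I would shift the summation index by setting $k=s-r$, factor out $\delta^r$, and apply the base formula: $\sum_{s=r}^t\delta^s=\delta^r\sum_{k=0}^{t-r}\delta^k=\delta^r\,\frac{1-\delta^{t-r+1}}{1-\delta}=\frac{\delta^r-\delta^{t+1}}{1-\delta}$, which is exactly the claimed expression. The only care needed is the degenerate case $r>t$, where the sum is empty and both sides vanish under the usual convention, so the stated range $r\le t$ is the relevant one.

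For part \ref{item:3} the cleanest route is to avoid dividing by $\delta$ (which would force an extra case analysis on $\delta=0$) and instead verify the equivalent product form $(\delta-\gamma)\sum_{s=0}^t\delta^{t-s}\gamma^s=\delta^{t+1}-\gamma^{t+1}$ directly by telescoping: distributing the factor gives $\sum_{s=0}^t\bigl(\delta^{t-s+1}\gamma^{s}-\delta^{t-s}\gamma^{s+1}\bigr)$, whose consecutive terms cancel, leaving $\delta^{t+1}-\gamma^{t+1}$. Dividing by $\delta-\gamma\neq 0$ then yields the identity. Honestly, there is no genuine obstacle here: both statements are routine finite-sum identities, and the only points requiring attention are keeping the endpoint indices consistent after the shift in \ref{item:1} and invoking the nonvanishing of the two denominators, which is precisely what the hypotheses guarantee.
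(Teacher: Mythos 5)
Your proof is correct. The paper states Lemma~\ref{lemma:series} as a standard preliminary fact and provides no proof at all (the statement simply ends with the open-box symbol), so there is nothing to compare against; your elementary derivation --- the index shift plus the closed form of the finite geometric series for part~\ref{item:1}, and the telescoping of $(\delta-\gamma)\sum_{s=0}^t\delta^{t-s}\gamma^s$ for part~\ref{item:3} --- is a complete and appropriate way to fill in the omitted argument, and your remark about which half of the hypothesis $\delta\neq\gamma\neq 1$ each identity actually uses is accurate.
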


\subsection{Distributed stochastic optimization set-up}
Let us start by formalizing the optimization problem addressed in this paper.
We consider problems in the form
\begin{equation}\label{pb:problem}
    \begin{aligned}
        & \m_{x\in X}
        & & \sum_{i=1}^N \E[h_i(x;\xi_i)].
    \end{aligned}
\end{equation}
where $\xi_i$ is a random variable and $x\in\R^n$, $n\gg 1$, has a block structure, i.e., $x=[x_1^\top,\dots,x_B^\top]^\top.
$, with $x_\ell\in\R^{n_\ell}$ for all $\ell$ and $\sum_{\ell=1}^B n_\ell = n$. The decision variable $x$ can be very high-dimensional, which calls for block-wise algorithms.

Let $f_i(x)\triangleq\E[h_i(x;\xi_i)]$. Moreover, let $g_{i}(x;\xi_i)\in\partial h_i(x;\xi_i)$ (resp. $\gs_i(x)\in\partial f_i(x)$) be a subgradient of $h_i(x;\xi_i)$ (resp. $f_i(x)$) computed at $x$. Then, the following assumption holds for problem~\eqref{pb:problem}.
\begin{assumption}[Problem structure]\label{assumption:problem_structure}
    \hspace{1ex}
    \begin{enumerate}[label=(\Alph*)]
    \item The constraint set $X$ has the block structure $X = X_1\times \dots\times X_B$, where, for $\ell=1,\dots,B$, the set $X_\ell\subseteq\R^{n_\ell}$ is closed and convex, and $\sum_{\ell=1}^B n_\ell = n$.
    \item\label{assumption:strong_convexity} The function $h_i(x,\xi_i):\R^n\to\R$ is continuous, strongly convex and possibly nonsmooth for all $x\in X$ and every $\xi_i$, for all $i\in\until{N}$. In particular, there exists a constant $\bsigma>0$ such that
        $f_i(a)\geq f_i(b) - \langle \gs_i(b), b-a\rangle + \frac{\bsigma}{2}\| a- b \|^2$,
    for all $a,b\in X$ and all $i\in\until{N}$.

    \item\label{subg} every subgradient $g_{i}(x;\xi_i)$ is an unbiased estimator of the subgradient of $f_i$, i.e.,
    $\E[g_{i}(x;\xi_i)]=\gs_i(x)$.
    Moreover, there exist constants $G_{i}\in[0,\infty)$ and $\bG_{i}\in[0,\infty)$
    such that 
    $\E[\|g_i(x;\xi_i)\|]\leq G_{i}$, and $\E[\|g_i(x;\xi_i)\|^2]\leq \bG_{i}$,
    for all $x$ and $\xi_i$, for all $i\in\until{N}$.\oprocend
    \end{enumerate}
\end{assumption}

Let us denote by $g_{i,\ell}(x;\xi_i)$ the $\ell$-th block of $g_{i}(x;\xi_i)$
and let $\gs(x)\in\partial f(x)$ be a subgradient of $f$ computed at $x$. Then,
Assumption~\ref{assumption:problem_structure}\ref{subg} implies that
$\E[\|g_{i,\ell}(x;\xi_i)\|]\leq G_i$ for all $\ell$ and $\|\gs_i(x)\|\leq G_i$.
Moreover, let $\bG\triangleq\sum_{i=1}^N \bG_i$ and
$G\triangleq\sum_{i=1}^N G_i$. Then, $\|\gs(x)\|{\leq} G$ and
$\|\gs_i(x)\|{\leq} G$ for all $i$.

\vspace{2ex}

Problem~\eqref{pb:problem} is to be cooperatively solved by a network of $N$
agents. Agents locally know only a portion of the entire optimization
problem. Namely, agent $i$ knows only $g_i(x;\xi_i)$ for any $x$ and
  $\xi_i$, and the constraint set $X$.  The communication network is assumed to
satisfy the next assumption.
\begin{assumption}[Communication structure]\label{assumption:communication}
    \hspace{1ex}
    \begin{enumerate}[label=(\Alph*)]
        \item The network is modeled through a weighted \emph{strongly connected} directed graph $\GG=(\VV,\EE, \WW)$ with $\VV=\until{N}$, $\EE\subseteq\VV\times\VV$ and $\WW\in\R^{N\times N}$ being the weighted adjacency matrix. We define $\NNio\triangleq\{j\mid (i,j)\in\EE\}\cup\{i\}$ and $\NNii\triangleq\{j\mid (j,i)\in\EE\}\cup\{i\}$.
        \item For all $i,j\in\until{N}$, the weights $w_{ij}$ of the weight matrix $\WW$ satisfy
        \begin{enumerate}[label=(\roman*)]
            \item $w_{ij}>0$ if and only if $j\in\NNii$;
            \item there exists a constant $\eta>0$ such that
                $w_{ii}\geq\eta$ and if $w_{ij}>0$, then $w_{ij}\geq\eta$;
            \item $\sum_{j=1}^N w_{ij}=1$ and $\sum_{i=1}^N w_{ij}=1$.\oprocend
        \end{enumerate} 
    \end{enumerate}
\end{assumption}

In order to solve problem~\eqref{pb:problem} agents will be using ad-hoc \emph{proximal mappings} (see, e.g.,~\cite{dang2015stochastic}). In particular, a function $\omega_{\ell}$ is associated to the
$\ell$-th block of the optimization variable for all $\ell$. Let the function
$\omega_{\ell}:X_\ell\to\R$, be continuously differentiable and
$\sigma_{\ell}$-strongly convex. Functions $\omega_{\ell}$ are sometimes
referred to as distance generating functions. Then, we define the \emph{Bregman's divergence} associated to $\omega_{\ell}$ as
\begin{equation*}
    \nu_{\ell}(a,b)=\omega_{\ell}(b)-\omega_{\ell}(a)-\langle \nabla \omega_{\ell} (a), b-a\rangle,
\end{equation*}
for all $a,b\in X_\ell$. Moreover, given $a\in X_\ell$, $b\in\R^{n_\ell}$
and $c\in\R$, the proximal mapping associated to $\nu_{\ell}$ is defined as
\begin{equation}\label{eq:prox}
    \prox_{\ell} (a,b,c)=\arg\min_{u\in X_\ell}\Bigg(\langle b, u \rangle+\frac{1}{c} \nu_{\ell}(a, u)\Bigg).
\end{equation}
We make the following assumption on the functions $\nu_{\ell}$.
\begin{assumption}[Bregman's divergences properties]\label{assumption:proximal_functions}
    \hspace{1ex}
    \begin{enumerate}[label=(\Alph*)]
        \item\label{assumption:quadratic_growth} There exists a constant $Q>0$ such that 
        \begin{equation}\label{eq:assumption_growth}
            \nu_{\ell}(a,b)\leq\frac{Q}{2}\|a-b\|^2,\quad\forall a,b\in X_\ell
        \end{equation}
        for all $\ell\in\until{B}$.
        \item\label{assumption:separate} For all $\ell\in\until{B}$, the function $\nu_{\ell}$ satisfies
        \begin{equation}\label{eq:separable}
            \hspace{-3ex}\nu_{\ell}\left(\sum_{j=1}^N \theta_j a_j,b\right){\leq} \sum_{j=1}^N \theta_j \nu_{\ell}(a_j,b), \; \forall a_1,\dots,a_N,b{\in} X_\ell,
        \end{equation}
        where $\sum_{j=1}^N \theta_j=1$ and $\theta_j\geq 0$ for all $j$.\oprocend
    \end{enumerate}
\end{assumption}
Notice that Assumption~\ref{assumption:proximal_functions}\ref{assumption:quadratic_growth} implies that, given any two points $a,b\in X$, 
\begin{equation}
    \sum_{\ell=1}^B \nu_{\ell}(a_\ell,b_\ell)\leq\frac{Q}{2}\sum_{\ell=1}^B\|a_\ell-b_\ell\|^2=\frac{Q}{2}\|a-b\|^2.
\end{equation}
Moreover, Assumption~\ref{assumption:proximal_functions}\ref{assumption:separate} is satisfied by many functions (such as the
quadratic function and the exponential function)
and conditions on $\omega_{\ell}$
guaranteeing~\eqref{eq:separable} can be provided~\cite{bauschke2001joint}.

\section{Distributed Block Proximal Method}\label{sec:algorithm}
Let us now recall the \DBP for solving problem~\eqref{pb:problem} in a distributed way. The pseudocode of the algorithm is reported in Algorithm~\ref{alg:DBS}, where, for notational convenience, we defined $g_{i,\ell}(t)\triangleq g_{i,\ell}(y_i(t);\xi_i(t))$. We refer to~\cite{farina2019randomized} for all the details.

\begin{algorithm}
    \small
    \begin{algorithmic}
        \init $x_i(0)$
		\evol for $t=0,1,\dots$
         
        \State \textsc{Update} for all $j\in\NNii$
        \begin{equation*}\label{eq:xl_update}
            x_{j,\ell{\mid}i}(t) = 
            \begin{cases}
                x_{j,\ell}(t), &\text{if }\ell=\ell_j(t-1) \; \text{and} \; s_j(t-1)=1\\
                x_{j,\ell{\mid}i}(t-1), &\text{otherwise}
            \end{cases}
        \end{equation*}
        \If{$s_i^t=1$ }
        \State\textsc{Pick} $\ell_i(t)\in\until{B}$ with $P(\ell_i(t)=\ell)=p_{i,\ell}>0$
        \State\textsc{Compute} 
		\begin{equation*}\label{eq:y_update_l}
			y_i(t) = \sum_{j\in\tNNii} w_{ij} \lxi
		\end{equation*}
        \State\textsc{Update} 
        \begin{equation*}\label{eq:x_update_l}
            x_{i,\ell}(t+1) = 
            \begin{cases}
                \prox_{\ell}\bigg(y_{i,\ell}(t),g_{i,\ell}(t),\alpha_i\bigg),&\text{if } \ell=\ell_i(t)\\
                x_{i,\ell}(t),&\text{otherwise}
            \end{cases}
        \end{equation*}
        \State\textsc{Broadcast} $x_{i,\ell_i(t)}(t+1)$ to $j\in\NNio$
        \Else{ $x_i(t+1)=x_i(t)$}
        \EndIf
		
	\end{algorithmic}
	\caption{\DBP}\label{alg:DBS}
\end{algorithm} 

The algorithm works as follows. Each
agent $i$ maintains a local solution estimate $x_i(t)$ and a local copy of the
estimates of its in-neighbors (namely, $\lxi$ denotes the copy of the solution
estimate of agent $j$ at agent
$i$).
The initial conditions are initialized with random
(bounded) values $x_i(0)$ which are shared between neighbors.
At each iteration, agents can be awake or idle, thus modeling a possible asynchrony in the network. The probability of agent $i$ to be awake is denoted by $p_{i,on}\in (0,1]$.
If agent $i$ is awake at iteration $t$, it picks randomly a block $\ell_i(t)\in\until{B}$, some $\xi_i(t)$, and performs two updates:
\begin{enumerate}[label=(\roman*)]
    \item it computes a weighted average of its in-neighbors' estimates $\lxi$, $j\in\tNNii$;
    \item it computes
      $x_i(t+1)$ by updating the $\ell_i(t)$-th block of $x_i(t)$ through a proximal mapping step (with a constant stepsize $\alpha_i$) and leaving the other blocks unchanged.
\end{enumerate}
Finally, it broadcasts $x_{i,\ell_i(t)}(t+1)$ to its out-neighbors. The
status (awake or idle) of node $i$ at iteration $t$ is modeled as a random
variable $s_i(t)\in\{0,1\}$ which is $1$ with
probability $p_{i,on}$ and $0$ with probability $1-p_{i,on}$.

As already stated in~\cite{farina2019randomized}, it is worth remarking that all the quantities involved in the \DBP are local for each node. In fact, each node has locally defined probabilities (both of awakening and block drawing) and local stepsizes.
Moreover, it is worth recalling that, from~\cite[Lemma~5]{farina2019randomized}, we have $x_{j{\mid}i}(t)=x_j(t)$ for all $t$ and hence, Algorithm~\ref{alg:DBS} can be compactly rewritten as follows. For all $i\in\until{N}$ and all $t$, if $s_i(t)=1$,
\begin{align}
    y_i(t) &= \sum_{j=1}^N w_{ij} x_j(t),\label{eq:y_update}\\
    x_{i,\ell}(t+1) &= 
    \begin{cases}
        \textup{prox}_{\ell}\bigg(y_{i,\ell}(t),g_{i,\ell}(t),\alpha_i\bigg),&\text{if } \ell=\ell_i(t),\\
        x_{i,\ell}(t),&\text{otherwise},\label{eq:x_update}
    \end{cases}
\end{align}
else, $x_i(t+1)=x_i(t)$.
We will use~\eqref{eq:y_update}-\eqref{eq:x_update} in place of Algorithm~\ref{alg:DBS}, in the following analysis.

\section{Algorithm analysis and convergence rate}\label{sec:analysis}
Let $\x(\tau)\triangleq[x_1(\tau)^\top, \dots, x_N(\tau)^\top]^\top$ and let $\mS(t)\triangleq \{\x(\tau)\mid \tau\in\{0,\dots,t\}\}$ be the se set of estimates generated by the \DBP up to iteration $t$. Moreover, define the probability of node $i$ to both be awake and pick block $\ell$ as $$\pi_{i,\ell}\triangleq p_{i,on} p_{i,\ell}$$ and define $a\triangleq[\alpha_1,\dots,\alpha_N]^\top$, $a_M\triangleq\max_{i}\alpha_i$ and $a_m\triangleq\min_{i}\alpha_i$. Moreover, define the average (over the agents) of the local estimates at $t$ as
\begin{equation}\label{eq:x_mean}
    \bar{x}(t)\triangleq\frac{1}{N}\sum_{i=1}^N x_i(t).
\end{equation}
Finally, let us make the following assumption about the random variables involved in the algorithm.
\begin{assumption}[Random variables]\label{assumption:random_variables}
    \hspace{1ex}
    \begin{enumerate}[label=(\Alph*)]
    \item\label{assumption:iid} The random variables $\ell_i(t)$ and $s_i(t)$ are independent and identically distributed for all $t$, for all $i\in\until{N}$.
    \item\label{assumption:indepentent}
    For any given $t$, the random variables $s_i(t)$, $\ell_i(t)$ and $\xi_i(t)$ are independent of each other for all $i\in\until{N}$.
    \item\label{assumption:initial}
    There exists a constants $C_i\in[0,\infty)$ such that $\E[\|x_i(0)\|]\leq C_i$ for all $i\in\until{N}$ and hence $\E[\|\x(0)\|]\leq C=\sum_{i=1}^N C_i$. \oprocend
    \end{enumerate}
\end{assumption}

In the following we analyze the convergence properties of the \DBP with constant
stepsizes under the previous assumptions. We start by showing that consensus is
achieve in the network, by specializing the results
in~\cite{farina2019randomized}. Then, we show that also optimality is achieved
in expected value and with a constant error, by studying the properties of an
ad-hoc Lyapunov-like function. Finally, we show how the main result implies a
linear convergence rate for the algorithm.

\subsection{Reaching consensus}
The following lemma characterizes the expected distance of $x_i(t)$ and $y_i(t)$ from the average $\bx(t)$ (defined in~\eqref{eq:x_mean}).
\begin{lemma}\label{lemma:xi-bx}
  Let Assumptions~\ref{assumption:problem_structure},~\ref{assumption:communication},~\ref{assumption:random_variables} hold. Then, there exist constants $M\in(0,\infty)$
  and $\mu_M\in(0,1)$ such that
\begin{align}
    \E[\|x_i(t)-\bx(t)\|] &\leq \mu_M^{t-1} \bR+\bS,\label{eq:xi-bx}\\
    \E[\|y_i(t)-x_i(t)\|] &\leq 2 \mu_M^{t-1} \bR + 2\bS
\end{align}
for all $i\in\until{N}$ and all $t\geq 1$, with
$\bR= MB\left(C- \frac{a_M G}{\sigma(1-\mu_M)}\right)$ and $\bS= a_M \frac{MBG}{\sigma}\frac{2-\mu_M}{1-\mu_M}$.
\end{lemma}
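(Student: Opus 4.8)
The plan is to treat~\eqref{eq:y_update}--\eqref{eq:x_update} as a perturbed consensus iteration and to invoke the general consensus estimate already established for the \DBP in~\cite{farina2019randomized}, specializing it to the present constant-stepsize regime. First I would recall that, by~\cite[Lemma~5]{farina2019randomized}, $x_{j\mid i}(t)=x_j(t)$, so that the local copies can be eliminated and the dynamics written purely in terms of $x_j(t)$ and the doubly stochastic weights $w_{ij}$. The awake/idle status $s_i(t)$ and the block draw $\ell_i(t)$ make each update a randomized, block-wise averaging step driven by $W$ followed by a proximal correction; averaging over these random variables (using Assumption~\ref{assumption:random_variables} and the probabilities $\pi_{i,\ell}=p_{i,on}p_{i,\ell}$) yields an effective recursion for the consensus error $x_i(t)-\bx(t)$ whose homogeneous part contracts geometrically. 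The existence of a contraction constant $\mu_M\in(0,1)$ and a multiplicative constant $M\in(0,\infty)$ --- together with the block factor $B$ --- is exactly the content borrowed from~\cite{farina2019randomized}.

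The second ingredient is a uniform bound on the per-step perturbation introduced by the proximal mapping. I would bound $\|x_{i,\ell}(t+1)-y_{i,\ell}(t)\|$ for the updated block by exploiting the $\sigma_\ell$-strong convexity of $\nu_\ell$ (inherited from $\omega_\ell$): comparing the minimizer $u^\star=\prox_\ell(y_{i,\ell}(t),g_{i,\ell}(t),\alpha_i)$ with the base point $y_{i,\ell}(t)$ in the strongly convex objective of~\eqref{eq:prox} gives a displacement proportional to $\alpha_i\|g_{i,\ell}(t)\|/\sigma_\ell$, hence at most proportional to $\alpha_i\|g_{i,\ell}(t)\|/\sigma$ with $\sigma\triangleq\min_\ell\sigma_\ell$. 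Taking expectations and using Assumption~\ref{assumption:problem_structure}\ref{subg} (so that $\E[\|g_{i,\ell}(t)\|]\le G_i\le G$) bounds the expected per-step movement by a quantity proportional to $\alpha_i G/\sigma\le a_M G/\sigma$, which is precisely the constant-stepsize specialization: every step contributes the same perturbation size.

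Substituting this uniform perturbation into the general bound of~\cite{farina2019randomized} turns the transient term into a geometric weighting of the initial condition and the forcing term into a sum $\sum_s \mu_M^{t-1-s}\,(a_M G/\sigma)$ of geometrically weighted, equal perturbations. I would then collapse this series in closed form using Lemma~\ref{lemma:series}\ref{item:1}; the resulting expression splits, after regrouping, into a term decaying as $\mu_M^{t-1}$ (carrying the initial-condition constant $C$ through $MB$, plus the correction $-MB\,a_M G/(\sigma(1-\mu_M))$) and a constant steady-state term proportional to $a_M$. Matching the two pieces via the identity $2-\mu_M-\mu_M^{t-1}=(1-\mu_M)+(1-\mu_M^{t-1})$ then yields exactly $\bR=MB\big(C-\tfrac{a_M G}{\sigma(1-\mu_M)}\big)$ and $\bS=a_M\tfrac{MBG}{\sigma}\tfrac{2-\mu_M}{1-\mu_M}$.

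Finally, for the bound on $\E[\|y_i(t)-x_i(t)\|]$ I would use the triangle inequality $\|y_i(t)-x_i(t)\|\le\|y_i(t)-\bx(t)\|+\|\bx(t)-x_i(t)\|$ together with the fact that $y_i(t)=\sum_j w_{ij}x_j(t)$ is a convex combination, so that $\|y_i(t)-\bx(t)\|=\|\sum_j w_{ij}(x_j(t)-\bx(t))\|\le\max_j\|x_j(t)-\bx(t)\|$; applying~\eqref{eq:xi-bx} to both summands produces the factor of $2$. The main obstacle is not this final bookkeeping but the block-wise consensus contraction itself: because only the drawn block of an awake agent is averaged, the effective per-block transition matrices are merely row-stochastic and the plain average $\bx(t)$ is not exactly invariant, so establishing the geometric rate $\mu_M$ and constant $M$ is delicate --- this is exactly why we rely on the analysis of~\cite{farina2019randomized} rather than reproving it here.
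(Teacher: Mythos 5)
Your proposal takes essentially the same route as the paper, whose proof consists precisely of specializing Lemmas~7 and~8 of~\cite{farina2019randomized} to constant stepsizes; your expanded account of that specialization (proximal displacement bounded by $\alpha_i G/\sigma$ via strong convexity of $\nu_\ell$, geometric summation via Lemma~\ref{lemma:series}, triangle inequality for the second bound) is consistent with the stated constants $\bR$ and $\bS$. One small caution on the final step: bound $\E[\|y_i(t)-\bx(t)\|]$ by the convex combination $\sum_j w_{ij}\,\E[\|x_j(t)-\bx(t)\|]$ rather than by $\E[\max_j\|x_j(t)-\bx(t)\|]$, since the expectation of a maximum is not controlled by the maximum of the expectations; the bound of~\eqref{eq:xi-bx} being uniform in $j$ and the weights summing to one then give the factor $2$ directly.
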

\begin{proof}
    The proof follows by using constant stepsizes in~\cite[Lemma~7 and Lemma~8]{farina2019randomized}.
\end{proof}

In the next section, in order to prove the convergence to the optimal cost with a linear rate, we will need the following result assuring the boundedness of a particular quantity. In particular, given a scalar $c\in(0,1)$, let us define
\begin{equation}
    \beta(t)\triangleq\sum_{\tau=0}^t c^{t-\tau} \E[\|x_i(\tau)-\bx(\tau)\|].
\end{equation}
Then, the next lemma provides a bound on $\beta(t)$ for all $t$.

\begin{lemma}\label{lemma:x_constant}
    Let Assumptions~\ref{assumption:problem_structure},~\ref{assumption:communication},~\ref{assumption:random_variables} hold.  Then, for any scalar $c\in (0,1)$, 
    \begin{enumerate}[label=(\roman*)]
        \item if $c\neq\mu_M$,
        \begin{align}
            \beta(t)&\leq  c^{t}\left(C + \frac{\bR}{c-\mu_M}\right) +\frac{1-c^t}{1-c}\bS \label{eq:xbound_sub}
        \end{align}
        \item if $c=\mu_M$, 
        \begin{align}
            \beta(t)&\leq  c^{t}\left(C + \frac{t\bR}{c}\right) +\frac{1-c^t}{1-c}\bS\label{eq:xbound_sub_eq}
        \end{align}
    \end{enumerate}
    for all $i\in\until{N}$, for all $t$.
\end{lemma}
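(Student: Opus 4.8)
The plan is to split the defining geometric sum at $\tau=0$ and feed the per-iteration consensus estimate of Lemma~\ref{lemma:xi-bx} into the remaining terms. Concretely, I would write
\[
\beta(t) = c^t\,\E[\|x_i(0)-\bx(0)\|] + \sum_{\tau=1}^t c^{t-\tau}\,\E[\|x_i(\tau)-\bx(\tau)\|],
\]
isolating $\tau=0$ because~\eqref{eq:xi-bx} is only stated for $\tau\ge 1$. For the $\tau=0$ term I would bound $\E[\|x_i(0)-\bx(0)\|]\le \E[\|\x(0)\|]\le C$: indeed $x_i(0)-\bx(0)$ is one block of $\x(0)-\mathbf{1}\otimes\bx(0)$, the projection of $\x(0)$ off the consensus subspace, whose norm is at most $\|\x(0)\|$, and then Assumption~\ref{assumption:random_variables}\ref{assumption:initial} applies. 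For every $\tau\ge 1$ I would substitute $\E[\|x_i(\tau)-\bx(\tau)\|]\le \mu_M^{\tau-1}\bR+\bS$ from~\eqref{eq:xi-bx}.

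This reduces the problem to evaluating two elementary sums, which is exactly the purpose of Lemma~\ref{lemma:series}. The coefficient of $\bS$ is $\sum_{\tau=1}^t c^{t-\tau}=\sum_{s=0}^{t-1}c^s=\frac{1-c^t}{1-c}$, i.e. Lemma~\ref{lemma:series}\ref{item:1}. The coefficient of $\bR$ is $\sum_{\tau=1}^t c^{t-\tau}\mu_M^{\tau-1}$; after the index shift $s=\tau-1$ it becomes $\sum_{s=0}^{t-1}c^{(t-1)-s}\mu_M^{s}$, which is precisely the left-hand side of Lemma~\ref{lemma:series}\ref{item:3} with $\delta=c$, $\gamma=\mu_M$ and upper limit $t-1$. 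This is where the case split in the statement originates: when $c\neq\mu_M$ the identity yields $\frac{c^t-\mu_M^t}{c-\mu_M}$, whereas when $c=\mu_M$ the summand is constant in $s$ and the sum collapses to $t\,c^{t-1}$.

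Assembling the pieces gives, in the case $c=\mu_M$,
\[
\beta(t)\le c^t C + \bR\, t\,c^{t-1} + \bS\,\tfrac{1-c^t}{1-c} = c^t\Big(C+\tfrac{t\bR}{c}\Big)+\tfrac{1-c^t}{1-c}\bS,
\]
which is exactly~\eqref{eq:xbound_sub_eq} with no slack introduced. In the case $c\neq\mu_M$ I obtain $\beta(t)\le c^t C + \bR\frac{c^t-\mu_M^t}{c-\mu_M}+\bS\frac{1-c^t}{1-c}$, and the stated bound~\eqref{eq:xbound_sub} then follows by rewriting $\bR\frac{c^t-\mu_M^t}{c-\mu_M}=\frac{c^t\bR}{c-\mu_M}-\frac{\mu_M^t\bR}{c-\mu_M}$ and discarding the term $\frac{\mu_M^t\bR}{c-\mu_M}$.

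The only delicate point, and the step I would double-check, is this last simplification: dropping $\frac{\mu_M^t\bR}{c-\mu_M}$ is legitimate only when that quantity is nonnegative, so I would verify the signs of $\bR=MB\big(C-\frac{a_M G}{\sigma(1-\mu_M)}\big)$ and of $c-\mu_M$ in the regime of interest (typically $\bR\ge 0$ and $c>\mu_M$ for small stepsizes), or else retain the sharper $\frac{c^t-\mu_M^t}{c-\mu_M}$ form. Everything else is routine bookkeeping: the index shift aligning the mixed geometric sum with Lemma~\ref{lemma:series}\ref{item:3}, and the off-by-one stemming from Lemma~\ref{lemma:xi-bx} being valid from $t=1$ rather than $t=0$.
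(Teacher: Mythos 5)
Your proof is correct and follows essentially the same route as the paper: isolate the $\tau=0$ term (bounded by $C$ via Assumption~\ref{assumption:random_variables}\ref{assumption:initial}), substitute the bound of Lemma~\ref{lemma:xi-bx} for $\tau\geq 1$, and evaluate the two geometric sums with Lemma~\ref{lemma:series}, splitting on $c=\mu_M$. The sign caveat you raise about discarding $\frac{\mu_M^t\bR}{c-\mu_M}$ is legitimate --- the paper's own proof simply asserts it ``removed the negative term'' without checking the signs of $\bR$ and $c-\mu_M$ --- so your instinct to either verify nonnegativity or retain the sharper $\frac{c^t-\mu_M^t}{c-\mu_M}$ form is if anything more careful than the original.
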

\begin{proof}
    By using Assumption~\ref{assumption:random_variables}\ref{assumption:initial}, for $\tau=0$, one has
    \begin{align}
        \E&[\|x_i(0)-\bx(0)\|]\leq  \E[\|x_i(0)\|] +\E[\|\bx(0)\|]\nonumber\\
        &\leq C_i +\frac{1}{N}\sum_{j=1}^N C_j \leq C_i +\max_j C_j \leq C\label{eq:x0}
    \end{align}
    Hence, $\beta(t)\leq c^t C +\sum_{\tau=1}^t  c^{t-\tau}\E[\|x_i(\tau)-\bx(\tau)\|]$ and, from Lemma~\ref{lemma:xi-bx}, we have 
    \begin{align}
        \beta(t)\leq c^t C + \bR \sum_{\tau=1}^t c^{t-\tau}\mu_M^{\tau-1}+\bS\sum_{\tau=1}^t c^{t-\tau}\label{eq:start}
    \end{align}
    Let us consider the case $c\neq \mu_M$. By using Lemma~\ref{lemma:series}, one easily gets
    \begin{align*}
        \beta(t)&\leq  c^{t}C + \frac{c^t-\mu_M^t}{c-\mu_M}\bR +\frac{1-c^t}{1-c}\bS\\
        &\leq  c^{t}\left(C + \frac{\bR}{c-\mu_M}\right) +\frac{1-c^t}{1-c}\bS
    \end{align*}
    where in the second line we have removed the negative term depending on $\mu_M^t$.
    For the case $c= \mu_M$ we have
    \begin{equation}
        \sum_{\tau=1}^t c^{t-\tau}\mu_M^{\tau-1} = \sum_{\tau=1}^{t} c^{t-1} = t c^{t-1}\label{eq:p1_eq}
    \end{equation}
    and~\eqref{eq:xbound_sub_eq} is obtained by substituting~\eqref{eq:p1_eq} in~\eqref{eq:start} and using Lemma~\ref{lemma:series}.
\end{proof}

\subsection{Reaching optimality}
Let us start by defining a Lyapunov-like function
\begin{equation}\label{eq:V}
    V_i^\tau \triangleq \sum_{\ell=1}^B \pi_{i,\ell}^{-1}\nu_{\ell}(x_{i,\ell}^{\tau},x_{\ell}^\star)
\end{equation}
and let $V^t \triangleq \sum_{i=1}^N V_i^t$.
Moreover, define
\begin{equation}
    \ff(\bx^t)\triangleq\min_{\tau\leq t} \E[f(\bar{x}^\tau)]
\end{equation}
and $\pi_m=\min_{i,\ell}\pi_{i,\ell}$.
Then, the following result holds true and will be the key for proving the linear convergence rate of the \DBP under the previous assumptions.
\begin{lemma}\label{lemma:strongly}
    Let Assumptions~\ref{assumption:problem_structure},~\ref{assumption:communication},~\ref{assumption:proximal_functions} and~\ref{assumption:random_variables} hold. Moreover, let $\alpha_i\leq\frac{Q}{\bsigma}$ for all $i$. Then, for all $t$,
    \begin{align}
        \E[V(t&+1)]\leq \left(1-\frac{\bsigma a_m \pi_m}{Q}\right)\E[V(t)]- \sum_{i=1}^N \alpha_i\left( \E[f_i(y_i(t))] - f_i(x^\star)\right) + \frac{a_M^2 \bG}{2\sigma}.\label{eq:lemma_strongly}
    \end{align}
\end{lemma}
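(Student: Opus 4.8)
The plan is to establish a one-step \emph{conditional} descent inequality for $V(t)$ and then take total expectation. The engine is the standard three-point (prox) inequality for Bregman proximal maps: writing $u_\ell^+ = \prox_\ell(y_{i,\ell}(t),g_{i,\ell}(t),\alpha_i)$, the optimality condition for the minimization in~\eqref{eq:prox} together with the Bregman three-point identity yields, for every block $\ell$ and every $z\in X_\ell$,
\[
\alpha_i\langle g_{i,\ell}(t),u_\ell^+-z\rangle \le \nu_\ell(y_{i,\ell}(t),z)-\nu_\ell(y_{i,\ell}(t),u_\ell^+)-\nu_\ell(u_\ell^+,z).
\]
Choosing $z=x_\ell^\star$ turns this into an upper bound on $\nu_\ell(u_\ell^+,x_\ell^\star)$, which is the term governing the evolution of $V_i$ when block $\ell$ is the active one.

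First I would condition on the history $\mS(t)$ and take expectation over the per-iteration randomness $(s_i(t),\ell_i(t),\xi_i(t))$. Since $y_i(t)=\sum_j w_{ij}x_j(t)$ is $\mS(t)$-measurable and, by Assumption~\ref{assumption:random_variables}\ref{assumption:indepentent}, the three variables are mutually independent, block $\ell$ of agent $i$ is updated to $u_\ell^+$ with probability exactly $\pi_{i,\ell}$ and is left unchanged otherwise. The weighting $\pi_{i,\ell}^{-1}$ in the definition~\eqref{eq:V} of $V$ is tailored precisely so that this expectation over $(s,\ell)$ collapses to $V_i(t)+\sum_\ell\big(\nu_\ell(u_\ell^+,x_\ell^\star)-\nu_\ell(x_{i,\ell}(t),x_\ell^\star)\big)$, removing the probabilities from the ``active'' block term.

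Next I would insert the prox inequality and split $\langle g_{i,\ell}(t),u_\ell^+-x_\ell^\star\rangle=\langle g_{i,\ell}(t),u_\ell^+-y_{i,\ell}(t)\rangle+\langle g_{i,\ell}(t),y_{i,\ell}(t)-x_\ell^\star\rangle$. The first piece is absorbed against $-\nu_\ell(y_{i,\ell}(t),u_\ell^+)$: using $\nu_\ell(a,b)\ge\frac{\sigma}{2}\|a-b\|^2$ (from $\sigma$-strong convexity of $\omega_\ell$) and Young's inequality, these combine into the variance term $\frac{\alpha_i^2\|g_i(t)\|^2}{2\sigma}$, whose $\xi$-expectation is at most $\frac{\alpha_i^2\bG_i}{2\sigma}$ by Assumption~\ref{assumption:problem_structure}\ref{subg}. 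Summing the second piece over $\ell$ reconstructs the full inner product $\langle g_i(t),y_i(t)-x^\star\rangle$; taking the expectation over $\xi_i(t)$ and using unbiasedness gives $\langle\gs_i(y_i(t)),y_i(t)-x^\star\rangle$, to which I apply the strong-convexity inequality of Assumption~\ref{assumption:problem_structure}\ref{assumption:strong_convexity} to extract both $-\alpha_i(f_i(y_i(t))-f_i(x^\star))$ and the crucial $-\frac{\bsigma\alpha_i}{2}\|y_i(t)-x^\star\|^2$.

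It then remains to convert the residual Bregman and quadratic terms into the contraction factor $\big(1-\frac{\bsigma a_m\pi_m}{Q}\big)V(t)$, and this is where the block structure makes things delicate; I expect it to be the main obstacle. Two uses of Assumption~\ref{assumption:proximal_functions} handle it. The quadratic-growth bound~\eqref{eq:assumption_growth} gives $\sum_\ell\nu_\ell(y_{i,\ell}(t),x_\ell^\star)\le\frac{Q}{2}\|y_i(t)-x^\star\|^2$, so the strong-convexity term dominates $-\frac{\bsigma\alpha_i}{Q}\sum_\ell\nu_\ell(y_{i,\ell}(t),x_\ell^\star)$; combining with the $\sum_\ell\nu_\ell(y_{i,\ell}(t),x_\ell^\star)$ carried along leaves the factor $(1-\frac{\alpha_i\bsigma}{Q})$, which is nonnegative thanks to the hypothesis $\alpha_i\le Q/\bsigma$. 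Summing over $i$, I would bound these $i$-dependent factors uniformly from above by $(1-\frac{a_m\bsigma}{Q})$ and then exploit that $y_i(t)$ is a convex combination: the separability/convexity property~\eqref{eq:separable} together with double stochasticity ($\sum_i w_{ij}=1$) telescopes $\sum_i\sum_\ell\nu_\ell(y_{i,\ell}(t),x_\ell^\star)\le\sum_i\sum_\ell\nu_\ell(x_{i,\ell}(t),x_\ell^\star)$. Finally, since $\pi_{i,\ell}^{-1}\le\pi_m^{-1}$, the aggregate $\sum_i\sum_\ell\nu_\ell(x_{i,\ell}(t),x_\ell^\star)$ is at least $\pi_m V(t)$, which yields the claimed rate; collecting the error as $\sum_i\frac{\alpha_i^2\bG_i}{2\sigma}\le\frac{a_M^2\bG}{2\sigma}$ and applying the tower property completes the bound. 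The subtlety to watch throughout is keeping the per-agent and per-block coefficients aligned so that the uniform bounds $\alpha_i\ge a_m$ and $\pi_{i,\ell}\ge\pi_m$ are each applied in the correct direction \emph{before} the doubly-stochastic averaging is invoked.
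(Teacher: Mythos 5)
Your proposal is correct and follows essentially the same route as the paper: the base conditional inequality you derive via the three-point prox argument and the $\pi_{i,\ell}^{-1}$ weighting is exactly the inequality the paper imports by citation from the earlier work, and the subsequent steps (strong convexity plus quadratic growth to get the $(1-\bsigma\alpha_i/Q)$ factor, separability of $\nu_\ell$ with double stochasticity, the $\pi_m$ bound, and the tower property) match the paper's proof step for step. The only difference is that you spell out the prox/expectation machinery that the paper outsources to its reference.
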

\begin{proof}
    In order to simplify the notation, let us denote $\gs_i(t)=\gs_i(y_i(t))$.
    By using the same arguments used in the proof of~\cite[Theorem~1]{farina2019randomized} we have
\begin{align}
    &\E[V_i(t+1)\mid\mS(t)]\leq V_i(t) - \sum_{\ell=1}^B\nu_{\ell}(x_{i,\ell}(t),x_{\ell}^\star) + \sum_{\ell=1}^B\nu_{\ell}(y_{i,\ell}(t),x_{\ell}^\star) -\alpha_i  \langle \gs_i(t),y_i(t)-x^\star\rangle+\frac{\alpha_i^2 \bG_i}{2\sigma}
    \label{eq:start2}
\end{align} 
    Now, By exploiting Assumptions~\ref{assumption:problem_structure}\ref{assumption:strong_convexity},~\ref{assumption:proximal_functions}\ref{assumption:quadratic_growth} and~\eqref{eq:assumption_growth}, one has that, for all $t$,
    \begin{align}
        \alpha_i  \langle \gs_i(t),y_i(t)-x^\star\rangle &\geq \alpha_i \left( f_i(y_i(t))-f_i(x^\star) + \frac{\bsigma}{2}\|y_i(t)-x^\star\|^2  \right)\nonumber\\
        &\geq \alpha_i \left( f_i(y_i(t))-f_i(x^\star)\right) +
          \frac{\bsigma\alpha_i}{Q}\sum_{\ell=1}^B\nu_\ell(y_{i,\ell}(t),x_{\ell}^\star). \label{eq:p1}
    \end{align}
     Now, by using~\eqref{eq:p1} in~\eqref{eq:start2} and by exploiting the fact that $\alpha_i\leq\frac{Q}{\bsigma}$, we get
    \begin{align}
        \E[V_i(t+1)\mid\mS(t)]
        &\leq V_i(t) - \sum_{\ell=1}^B\nu_\ell(x_{i,\ell}(t),x_{\ell}^\star) + \left(1-\frac{\bsigma\alpha_i}{Q}\right)\sum_{\ell=1}^B\nu_\ell(y_{i,\ell}(t),x_{\ell}^\star)\nonumber\\
        &\hspace{3ex}-\alpha_i   \left( f_i(y_i(t))-f_i(x^\star)\right)+\frac{\alpha_i^2 \bG_i}{2\sigma}\nonumber\\
        &\leq V_i(t) - \sum_{\ell=1}^B\nu_\ell(x_{i,\ell}(t),x_{\ell}^\star) + \left(1-\frac{\bsigma\alpha_i}{Q}\right)\sum_{j=1}^N w_{ij}\sum_{\ell=1}^B\nu_\ell(x_{j,\ell}(t),x_{\ell}^\star)\nonumber\\
        &\hspace{3ex}-\alpha_i   \left( f_i(y_i(t))-f_i(x^\star)\right)+\frac{\alpha_i^2 \bG_i}{2\sigma},
    \end{align}
    where in the second inequality we used assumption~\ref{assumption:proximal_functions}\ref{assumption:separate}.
    If we now sum over $i$, by noticing that $a_m\leq\alpha_i$ for all $i$, we obtain
    \begin{align}
        \sum_{i=1}^N\E[V_i^{t+1}\mid\mS(t)]\leq &\sum_{i=1}^N V_i(t) - \sum_{i=1}^N \sum_{\ell=1}^B\nu_\ell(x_{i,\ell}(t),x_{\ell}^\star) \nonumber\\
        &+ \sum_{i=1}^N \left(1-\frac{\bsigma\alpha_i}{Q}\right)\sum_{j=1}^N w_{ij}\sum_{\ell=1}^B\nu_\ell(x_{j,\ell}(t),x_{\ell}^\star)\nonumber\\
        &-\sum_{i=1}^N\alpha_i   \left( f_i(y_i(t))-f_i(x^\star)\right)+\sum_{i=1}^N\frac{\alpha_i^2 \bG_i}{2\sigma}.
    \end{align}
    Now, by using the fact that $a_m\leq \alpha_i\leq a_M$ for all $i$, the double stochasticity of $W$ from Assumption~\ref{assumption:communication}, and the definition of $\bar{G}$, one easily obtains that 
    \begin{align}
        \sum_{i=1}^N\E[V_i^{t+1}\mid\mS(t)]
        &\leq \sum_{i=1}^N V_i(t) -\frac{\bsigma a_m}{Q}\sum_{i=1}^N \sum_{\ell=1}^B\nu_\ell(x_{i,\ell}(t),x_{\ell}^\star)-\sum_{i=1}^N\alpha_i   \left( f_i(y_i(t))-f_i(x^\star)\right)+\frac{a_M^2 \bG}{2\sigma}.\label{eq:V_part}
    \end{align}
    Moreover, by using~\eqref{eq:V} we can rewrite
    \begin{align}
        \sum_{i=1}^N V_i(t) -\frac{\bsigma a_m}{Q}\sum_{i=1}^N \sum_{\ell=1}^B\nu_{\ell}(x_{i,\ell}(t),x_{\ell}^\star)
        &= \sum_{i=1}^N \sum_{\ell=1}^B \left(\pi_{i,\ell}^{-1}\nu_{\ell}(x_{i,\ell}(t),x_{\ell}^\star) -\frac{\bsigma a_m}{Q}\nu_{\ell}(x_{i,\ell}(t),x_{\ell}^\star)\right)\nonumber\\
        &\leq\left(1- \frac{\bsigma \pi_m  a_m}{Q}\right) V(t)\label{eq:Vdec}
    \end{align}
    where we have used the fact that $\sum_{\ell=1}^B\pi_m^{-1}\nu_{\ell}(a,b)\geq\sum_{\ell=1}^B\pi_{i,\ell}^{-1}\nu_{\ell}(a,b)$. Finally, by plugging~\eqref{eq:Vdec} in~\eqref{eq:V_part} and by using tower property of conditional expectation one gets~\eqref{eq:lemma_strongly}.
\end{proof}

Thanks to the previous results, we are now ready to state and prove the main result of this paper.
\begin{theorem}\label{theorem:bound}
    Let Assumptions~\ref{assumption:problem_structure},~\ref{assumption:communication},~\ref{assumption:proximal_functions} and~\ref{assumption:random_variables} hold. Moreover, let $\alpha_i\leq\frac{Q}{\bsigma}$ for all $i$ and let $c \triangleq \left(1-\frac{\bsigma a_m \pi_m}{Q}\right)$.
    Then, 
    \begin{enumerate}
        \item if $c\neq\mu_M$,
    \begin{align}
        \ff(\bx^t)-f(x^\star)&\leq\frac{c^{t}}{1-c^{t+1}}(Q+R_1)+S,\label{eq:f_bound}
    \end{align}
    \item if $c=\mu_M$, 
    \begin{align}
        \ff(\bx^t)-f(x^\star)&\leq \frac{c^{t}}{1-c^{t+1}} \left(Q +tR_2\right) +S,\label{eq:f_bound_eq}
    \end{align}
    \end{enumerate}
    where $Q=(1-c)\left(\frac{\E[V^0]}{a_m} + 3GC\right)$, $R_1= \frac{(1-c)3G\bR}{c-\mu_M}$, $R_2= \frac{(1-c)3G\bR}{c}$. and $S=\frac{ a_M^2 \bG}{2\sigma a_m} + 3G\bS$.
\end{theorem}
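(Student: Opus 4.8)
The plan is to turn the one-step estimate of Lemma~\ref{lemma:strongly} into a bound on the running minimum $\ff(\bx^t)$ by unrolling it as a geometric recursion in $\E[V(t)]$. First I would rewrite \eqref{eq:lemma_strongly} as $\E[V(t+1)]\le c\,\E[V(t)] - D(t) + \tfrac{a_M^2\bG}{2\sigma}$, where $c=1-\tfrac{\bsigma a_m \pi_m}{Q}$ and $D(t)\triangleq\sum_{i=1}^N\alpha_i\big(\E[f_i(y_i(t))]-f_i(x^\star)\big)$ is the linearized suboptimality drift. Since every Bregman divergence is nonnegative, $\E[V(t+1)]\ge 0$, so iterating the recursion and discarding $\E[V(t+1)]$ gives $\sum_{\tau=0}^t c^{t-\tau}D(\tau)\le c^{t+1}\E[V(0)] + \tfrac{a_M^2\bG}{2\sigma}\sum_{\tau=0}^t c^{t-\tau}$, the last sum equalling $\tfrac{1-c^{t+1}}{1-c}$ by Lemma~\ref{lemma:series}\ref{item:1}.

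Next I would convert the \emph{local} drift $D(\tau)$ into the \emph{global} suboptimality at the average iterate. Using $\alpha_i\ge a_m$ together with the fact that each $f_i$ is $G_i$-Lipschitz on $X$ (a consequence of $\|\gs_i\|\le G_i$ from Assumption~\ref{assumption:problem_structure}\ref{subg}), I would replace $y_i(\tau)$ by $\bx(\tau)$ via $f_i(y_i(\tau))\ge f_i(\bx(\tau)) - G_i\|y_i(\tau)-\bx(\tau)\|$, and sum over $i$. To control the gap I would use the triangle split $\|y_i(\tau)-\bx(\tau)\|\le\|y_i(\tau)-x_i(\tau)\|+\|x_i(\tau)-\bx(\tau)\|$ and Lemma~\ref{lemma:xi-bx}, yielding $\E[\|y_i(\tau)-\bx(\tau)\|]\le 3(\mu_M^{\tau-1}\bR+\bS)$; since $\sum_i G_i=G$, the per-agent errors collapse to the single factor $3G$, producing the coefficients that later appear inside $R_1,R_2$ and $S$. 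The outcome is a lower bound of the form $D(\tau)\ge a_m\big(\E[f(\bx(\tau))]-f(x^\star)\big)-3G\,a_m(\mu_M^{\tau-1}\bR+\bS)$.

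Substituting this into the unrolled inequality and using that $\E[f(\bx(\tau))]-f(x^\star)\ge\ff(\bx^t)-f(x^\star)$ for every $\tau\le t$ by definition of $\ff$, the weighted sum $\sum_{\tau=0}^t c^{t-\tau}\big(\E[f(\bx(\tau))]-f(x^\star)\big)$ is at least $\big(\ff(\bx^t)-f(x^\star)\big)\tfrac{1-c^{t+1}}{1-c}$. The accumulated consensus error is, up to the factor $3G a_m$, exactly the quantity $\beta(t)$, which Lemma~\ref{lemma:x_constant} bounds; this is precisely where the dichotomy $c\ne\mu_M$ versus $c=\mu_M$ in the statement originates (it mirrors \eqref{eq:xbound_sub}--\eqref{eq:xbound_sub_eq}). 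Dividing through by $a_m\tfrac{1-c^{t+1}}{1-c}$, bounding $c^{t+1}\le c^t$ on the $\E[V(0)]$ term, and collecting constants then yields \eqref{eq:f_bound} and \eqref{eq:f_bound_eq} with $Q,R_1,R_2,S$ as stated.

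The main obstacle is Step~2, the passage from the per-agent drift to a single global suboptimality term. The drift carries agent-dependent weights $\alpha_i$ and is evaluated at the \emph{distinct} local points $y_i(\tau)$, whereas the target $f(\bx(\tau))-f(x^\star)$ has uniform weights and a \emph{common} argument; replacing $y_i(\tau)$ by $\bx(\tau)$ is the role of the consensus bounds, but one must arrange the estimates so that the surviving coefficient of the nonnegative global gap is exactly $a_m$ while the error terms stay free of the ratio $a_M/a_m$. This is what dictates the use of the constants $G_i$ with $\sum_i G_i=G$ and the triangle split giving the factor $3$. Once this conversion is in place, the remainder is routine: the geometric summations are Lemma~\ref{lemma:series}, the error accumulation is Lemma~\ref{lemma:x_constant}, and the two cases of the theorem simply inherit the two cases there.
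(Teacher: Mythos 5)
Your proposal is correct and follows essentially the same route as the paper's proof: unrolling Lemma~\ref{lemma:strongly} geometrically, lower-bounding the weighted drift via $\alpha_i\geq a_m$, transferring $f_i(y_i(\tau))$ to $f(\bx(\tau))$ through the subgradient bound $\|\gs_i\|\leq G_i$ and the triangle split (the paper's inequality~\eqref{eq:x_i-xstar}, which produces the same factor $3G$), bounding the accumulated consensus error with Lemma~\ref{lemma:x_constant} in the two cases $c\neq\mu_M$ and $c=\mu_M$, and dividing by $a_m\frac{1-c^{t+1}}{1-c}$. No substantive differences.
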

\begin{proof}
    By recursively applying~\eqref{eq:lemma_strongly}, one has
    \begin{align}
        \sum_{\tau=0}^{t}c^{t-\tau}&\sum_{i=1}^N \alpha_i\left( \E[f_i(y_i(\tau))] - f_i(x^\star)\right)
        \leq c^{t+1} \E[V^0] + \sum_{\tau=0}^{t}c^{t-\tau} \frac{a_M^2 \bG}{2\sigma}\nonumber
    \end{align}
    Moreover, since $a_m\leq \alpha_i$ for all $i$, 
    \begin{align}
        \sum_{\tau=0}^{t}c^{t-\tau} a_m \sum_{i=1}^N\left( \E[f_i(y_i(\tau))] - f_i(x^\star)\right) 
        &\leq\sum_{\tau=0}^{t}c^{t-\tau}\sum_{i=1}^N \alpha_i\left( \E[f_i(y_i(\tau))] - f_i(x^\star)\right)\nonumber\\
        &\leq c^{t+1} \E[V^0]  + \sum_{\tau=0}^{t}c^{t-\tau} \frac{a_M^2 \bG}{2\sigma}\nonumber\\
        &= c^{t+1} \E[V^0]  + \frac{a_M^2 \bG}{2\sigma}\frac{1-c^{t+1}}{1-c}\label{eq:fsum}
    \end{align}
    where in the last line we used Lemma~\ref{lemma:series}, thanks to the fact that since by assumption $\alpha_i\leq\frac{Q}{\bsigma}$, we have $c\in(0,1)$. Then, from the convexity of $f$ we have that, at any iteration $t$,
    \begin{align}
        \sum_{\tau=0}^t c^{t-\tau} a_m \left( \E[f(\bar{x}(\tau))]-f(x^\star) \right) 
        &\geq \left(a_m\sum_{\tau=0}^t c^{t-\tau} \right)\left(\min_{\tau\leq t} \E[f(\bar{x}(\tau))]-f(x^\star)\right)\nonumber\\
        &=\left(a_m\frac{1-c^{t+1}}{1-c} \right)\left(\ff(\bx(t))-f(x^\star)\right)\label{eq:xstar}
    \end{align}
    where we used Lemma~\ref{lemma:series} and the definition of $\ff$.
    Now, by making some manipulation on the term $\E[f(\bar{x}(\tau))]-f(x^\star)=\E[f(\bx(\tau))-f(x^\star)]$, as in~\cite[Theorem~1]{farina2019randomized} we get
    \begin{align}
        \E[f(\bar{x}(\tau))-f(x^\star)]\leq\sum_{i=1}^N \E[\left(f_i(y_i(\tau))-f_i(x^\star)\right)] +\sum_{i=1}^N G_i\left(\E[\|y_i(\tau)-x_i(\tau)\|]+\E[\|x_i(\tau)-\bar{x}(\tau)\|]\right).\label{eq:x_i-xstar}
    \end{align}
    In the case $c\neq \mu_M$, by substituting~\eqref{eq:x_i-xstar} in~\eqref{eq:xstar} and by using~\eqref{eq:fsum} and Lemma~\ref{lemma:x_constant} one has 
    \begin{align*}
        \left(a_m\frac{1-c^{t+1}}{1-c} \right)&(\ff(\bx(t))-f(x^\star))\leq c^{t+1} \E[V^0]  + \frac{a_M^2 \bG}{2\sigma}\frac{1-c^{t+1}}{1-c}+ 3 a_m G\left( c^{t}\left(C + \frac{\bR}{c-\mu_M}\right) +\frac{1-c^t}{1-c}\bS \right).
    \end{align*}
    Now, by dividing both sides by $a_m$ and rearranging the term one has
    \begin{align*}
        \left(\frac{1-c^{t+1}}{1-c} \right)(\ff(\bx(t))-f(x^\star))
        &\leq c^{t+1} \frac{\E[V^0]}{a_m} + c^{t} 3G\left(C + \frac{\bR}{c-\mu_M}\right)+ \frac{1-c^{t+1}}{1-c} \frac{a_M^2 \bG}{2\sigma a_m} +\frac{1-c^t}{1-c}3G\bS \nonumber\\
        & \leq c^{t} \left(\frac{\E[V^0]}{a_m} + 3GC + \frac{3G\bR}{c-\mu_M}\right)  + \frac{1-c^{t+1}}{1-c} \left(\frac{a_M^2 \bG}{2\sigma a_m} + 3G\bS\right)
    \end{align*}
    where in the second line we used the fact that $c\leq1$. Finally,~\eqref{eq:f_bound} is obtained by dividing both sides by $\frac{1-c^{t+1}}{1-c}$. The case $c=\mu_m$ can be proven in a similar way. In fact, by using the same arguments as before, we have
    \begin{align*}
        \left(\frac{1-c^{t+1}}{1-c} \right)(\ff(\bx(t))-f(x^\star))
        &\leq c^{t} \frac{\E[V^0]}{a_m} + c^{t} 3G\left(C + \frac{t\bR}{c}\right) + \frac{1-c^{t+1}}{1-c} \left(\frac{a_M^2 \bG}{2\sigma a_m} + 3G\bS\right)\nonumber\\
        & \leq c^{t} \left(\frac{\E[V^0]}{a_m} + 3GC \right) + t c^{t}\frac{3G\bR}{c}  + \frac{1-c^{t+1}}{1-c} \left(\frac{a_M^2 \bG}{2\sigma a_m} + 3G\bS\right)
    \end{align*}
    thus leading to ~\eqref{eq:f_bound_eq} by dividing both sides by $\frac{1-c^{t+1}}{1-c}$.
\end{proof}
Notice that Theorem~\ref{theorem:bound} implies that convergence with a constant error is attained, i.e., define $\tilde{f}^\star = f(x^\star) + S$, then 
\begin{equation}
    \lim_{t\to\infty} \ff(\bx^t)-\tilde{f}^\star = 0.
\end{equation}
Moreover, the convergence rate is linear. In fact, recall that $c\in(0,1)$. Then, if $c\neq \mu_M$ one has
\begin{align*}
    \lim_{t\to\infty}\frac{\ff(\bx^{t+1})-\tilde{f}^\star}{\ff(\bx^t)-\tilde{f}^\star} &\leq \lim_{t\to\infty}\frac{\frac{c^{t+1}}{1-c^{t+2}}}{\frac{c^{t}}{1-c^{t+1}}} 
    = c,
\end{align*}
while, if $c=\mu_M$,
\begin{align*}
    \lim_{t\to\infty}\frac{\ff(\bx^{t+1})-\tilde{f}^\star}{\ff(\bx^t)-\tilde{f}^\star} &\leq \lim_{t\to\infty}\frac{\frac{c^{t+1}}{1-c^{t+2}}\left(\bar{\beta} +(t+1)\eta\right)}{\frac{c^{t}}{1-c^{t+1}}\left(\bar{\beta} +t\eta\right)} 
    = c.
\end{align*}

\begin{remark}
    Our block-wise algorithm has two main benefits in terms of communication and computation respectively. First, when a limited bandwidth is available in the communication channels, data that exceed the communication bandwidth are transmitted sequentially in classical algorithms. For example, if only one block fits the communication channel, our algorithm performs an update at each communication round, while classical ones need $B$ communication rounds per update. Second, in general, solving the minimization problem in~\eqref{eq:x_update} on the entire optimization variable or on a single block results in completely different computational times. 
\end{remark}

\begin{figure}[t!]
    \centering 
    \includegraphics[width=0.5\columnwidth]{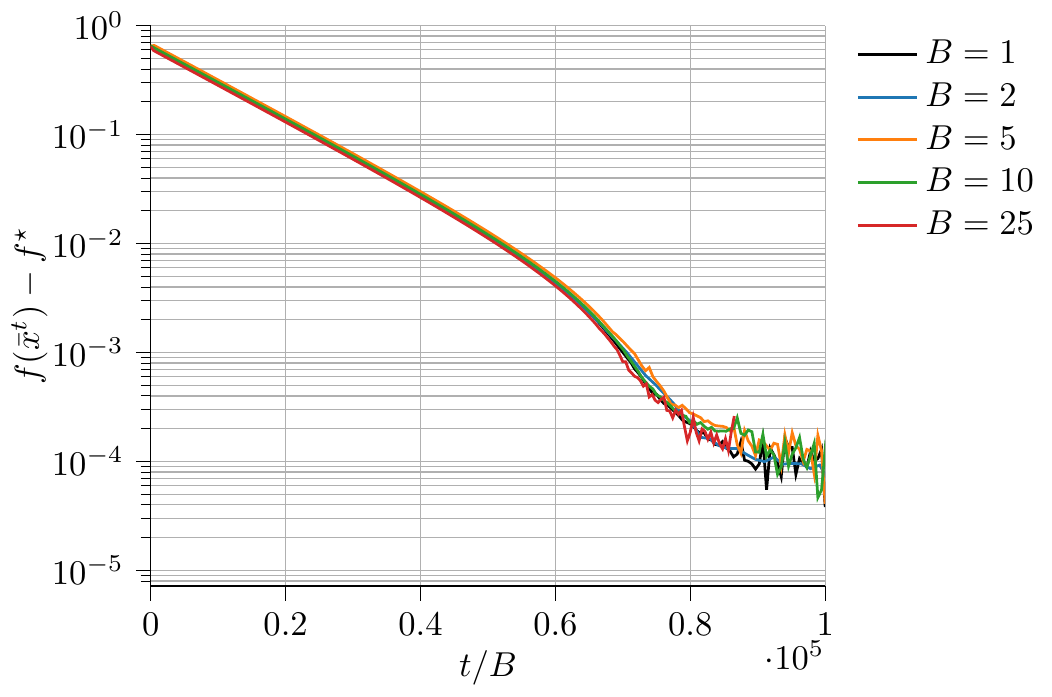}
    \caption{Numerical example: Evolution of the cost error normalized on the number of blocks.}
    \label{fig:cost}
\end{figure}

\section{Numerical example}\label{sec:experiment}
We consider as a numerical example a learning problem in which agents have to
classify samples belonging to two clusters. Formally, each agent $i\in\until{N}$
has $m_i$ training samples $q_i^1,\dots,q_{i}^{m_i}\in\R^d$ each of which has an
associated binary label $b_{i}^r\in\{-1,1\}$ for all $r\in\until{m_i}$. The goal
of the agents is to compute in a distributed way a linear classifier from the
training samples, i.e., to find a hyperplane of the form
$\{z\in\R^{d}\mid \langle \theta, z\rangle + \theta_0=0\}$, with $\theta\in\R^d$
and $\theta_0\in\R$, which better separates the training data. For notational
convenience, let $x=[\theta^\top, \theta_0]^\top\in\R^{d+1}$ and
$\hat{q}_{i}^r=[(q_{i}^r)^\top, 1]^\top$. Then, the presented problem can be
addressed by solving the following convex optimization problem, in which a
regularized Hinge loss is used as cost function,
\begin{equation*}\label{pb:regression}
    \begin{aligned}
        &\m_{x\in\R^{d+1}} 
        & & \sum_{i=1}^N\frac{1}{m_i}\sum_{r=1}^{m_i}\max\left(0, 1-b_{i}^r\langle x,\hat{q}_{i}^r\rangle\right)+\frac{\lambda}{2}\|x\|^2,
    \end{aligned}
\end{equation*}
where $\lambda>0$ is the regularization weight. This problem can be written in
the form of~\eqref{pb:problem} by defining
$\xi_i^r=(\hat{q}_{i}^r,b_i^r)$ and
\begin{align*}
    \E[h_i(x;\xi_i)]
    &=\frac{1}{m_i}\sum_{r=1}^{m_i}\left(\max\left(1-b_{i}^r\langle x,\hat{q}_{i}^r\rangle\right)+\frac{\lambda}{2N}\|x\|^2\right)
\end{align*}
for all $i\in\until{N}$.
In fact, as long as each data $\xi_i^r$ is uniformly
drawn from the dataset, Assumption~~\ref{assumption:problem_structure}\ref{subg}
is satisfied.  We implemented the algorithm in DISROPT~\cite{farina2019disropt}
and we tested it in this scenario with $N=48$ agents, $x\in\R^{50}$ and
different number of blocks, namely $B\in\{1,2,5,10,25\}$. We generated a
synthetic dataset composed of $480$ points and assigned $10$ of them to each
agent, i.e., $m_1=\dots=m_N=10$. Agents communicate according to a connected
graph generated according to an Erd\H{o}s-R\`{e}nyi random model with
connectivity parameter $p=0.5$. The corresponding weight matrix is built by
using the Metropolis-Hastings rule. Finally, we set $\lambda=1$, $p_{i,\ell}=1/B$ for all $i$ and all $\ell$, $p_{i,on}=0.95$ for all $i$ and local (constant) stepsizes $\alpha_i$ randomly chosen according to a normal distribution with  mean $0.005$ and standard deviation $10^{-4}$. The evolution of the cost error adjusted
with respect to the number of blocks is reported in Figure~\ref{fig:cost} for
the considered block numbers. The linear convergence rate can be easily
appreciated from the figure and confirms the theoretical analysis.

\section{Conclusions}\label{sec:conclusion}
In this paper, we studied the behavior of the \DBP when applied to problems
involving (non-smooth) strongly convex functions and when agents in the network
employ constant stepsizes. A linear convergence rate (with a constant error)
has been obtained in terms of the expected distance from the optimal cost. A
numerical example involving a learning problem confirmed the theoretical
analysis.

\bibliographystyle{IEEEtran}
\bibliography{biblio}

\end{document}